\numberwithin{equation}{section}
\theoremstyle{plain}
\newtheorem{theorem}{Theorem}[section]
\newtheorem{corollary}[theorem]{Corollary}
\newtheorem{proposition}[theorem]{Proposition}
\theoremstyle{definition}
\numberwithin{equation}{section}
\newcommand{\rnm}{\mathbb{R}}
\newcommand{\znm}{\mathbb{Z}}
\newcommand{\eun}[1]{\rnm^{#1}}
\newcommand{\ra}{\rightarrow}
\newcommand{\Ra}{\Rightarrow}
\newcommand{\LRA}{\Longleftrightarrow}
\newcommand{\se}{\subseteq}
\DeclareMathOperator{\sspan}{span}
\DeclareMathOperator{\spt}{\text{sppt}}
\newcommand{\mc}[1]{\mathcal{#1}}
\newcommand{\mbb}[1]{\mathbb{#1}}
\newcommand{\os}[1]{^{#1}}
\newcommand{\us}[1]{_{#1}}
\newcommand{\nf}{\infty}
\newcommand{\tm}{\times}
\newcommand{\pd}{\partial}
\newcommand{\dx}[1]{\,d#1}
\newcommand{\cl}{\colon}
\newcommand{\prs}[1]{\left(#1\right)}
\newcommand{\prss}[1]{(#1)}
\newcommand{\abs}[1]{\left|#1\right|}
\newcommand{\angs}[1]{\left\langle#1\right\rangle}
\newcommand{\cbrks}[1]{\left\{#1\right\}}
\newcommand{\norm}[1]{\left\lVert#1\right\rVert}
\newcommand{\norms}[1]{\lVert#1\rVert}
\newcommand{\nlspace}{\\[0.3cm]}
\newcommand\numberthis{\addtocounter{equation}{1}\tag{\theequation}}
\newcommand{\sph}[1]{\mbb{S}^{#1}}
\newcommand{\comment}[1]{}
\newcommand{\ts}{\textstyle}
\DeclareMathOperator{\conv}{Conv}
\newcommand{\parl}{\parallel}
\begin{document}
	

\title[Simplex Averaging Operators: quasi-Banach and $L^p$-improving Bounds]{Simplex Averaging Operators: quasi-Banach\\ and $L^p$-improving Bounds in Lower Dimensions}
\author{Alex Iosevich}
\author{Eyvindur Ari Palsson}
\thanks{The work of the first listed author was supported in part by NSF grant HDR TRIPODS - 1934962, the work of the second listed author was supported in part by Simons Foundation Grant \#360560, and the work of the third listed author was supported in part by NSF grant DMS - 1907435.}
\author{Sean R. Sovine}

\maketitle

\vspace*{-8mm}
\begin{abstract}
	We establish some new $L^p$-improving bounds for the $k$-simplex averaging operators $S^k$ that hold in dimensions $d \geq k$. As a consequence of these $L^p$-improving bounds we obtain nontrivial bounds $S^k\colon L^{p_1}\times\cdots\times L^{p_k}\rightarrow L^r$ with $r < 1$. In particular we show that the triangle averaging operator $S^2$ maps $ L^{\frac{d+1}{d}}\times L^{\frac{d+1}{d}} \rightarrow L^{\frac{d+1}{2d}}$ in dimensions $d\geq 2$. This improves quasi-Banach bounds obtained in \cite{PalssonSovine} and extends bounds obtained in \cite{GreenleafIosevichKrauseLiu} for the case of $k = d = 2$. 
\end{abstract}


\vspace*{1mm}

\section{Introduction}

Let $d\geq k$ and let $\Delta_k = \{ u_0 = 0, u_1, \ldots, u_k \}\se \eun{d}$ be the set of vertices of a regular $k$-simplex of unit side length. We define the {$k$-simplex averaging operator}
\[
S^{k}(f_1, \ldots, f_{k})(x) := \int_{O(d)}f_1(x - Ru_1)\cdots f_{k}(x - Ru_{k})\dx{\mu(R)},
\]
where $\mu$ is the normalized Haar measure on the group $O(d)$. At input $x$ this operator computes the average value of the function $f_1 \otimes \cdots \otimes f_k$ on the smooth manifold 
\[
\mc M_k(x) = \{ (v_1, \ldots, v_k) \in (\eun{d})^k ~\cl~ |v_i - v_j|^2 = 1 \text{ for } 0\leq i < j \leq k, \text{ with }v_0 = x \}
\]
of all tuples $(v_1, \ldots, v_k) \in (\eun{d})^k$ such that $\{x, v_1, \ldots, v_k\}$ is the set of vertices of a regular $k$-simplex of unit side length.
The $k$-simplex averaging operator is a $k$-linear analogue of the spherical averaging operator, which computes the average value of the function $f$ over a sphere centered at $x$ and can be expressed as
\[
S^1(f)(x):= \int_{O(d)}f(x - Ru_1)\dx{\mu(R)},
\]
for any $u_1$ with $|u_1| = 1$.

Cook, Lyall, and Magyar \cite{CookLyallMagyar} introduce a technique that can be used to establish a wide range of nontrivial and $L^p$-improving bounds for averages over non-degenerate $k$-simplices in higher dimensions. 
In this work we establish $L^p$ improving bounds for $S^k$ that hold in lower dimensions and show how these can be used to obtain further quasi-Banach bounds for $S^k$. 
In the case with $k=2$ we have the {triangle averaging operator}, which we denote by $T:= S^2$, and our result is:
\begin{theorem}\label{t1}
	The triangle averaging operator $T$ satisfies the bound.
	\[
	T\cl L\os{\frac{d+1}{d}}(\eun{d})\tm L\os{\frac{d+1}{d}}(\eun{d}) \ra L\os{s}(\eun{d}), \quad \text{ for all }~~~ s \in [\textstyle{\frac{d+1}{2d}}, 1] ~\text{ and }~~ d\geq 2,
	\]
	Moreover, 
	\[
	T\cl L\os{p}(\eun{d})\tm L\os{q}(\eun{d}) \ra L\os{1}(\eun{d})
	\]
	if and only if $(\frac{1}{p}, \frac{1}{q})$ lies in the convex hull of the points $\{(0,1), (1,0), (\frac{d}{d+1}, \frac{d}{d+1})\}$.
\end{theorem}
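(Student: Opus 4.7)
The plan is to anchor the argument at the Banach endpoint $T\colon L^{\frac{d+1}{d}}\times L^{\frac{d+1}{d}}\to L^1$. Starting from
\[
\|T(f_1, f_2)\|_1 = \int_{O(d)}\int_{\mathbb{R}^d} f_1(x - Ru_1)\, f_2(x - Ru_2)\, dx\, d\mu(R),
\]
I would swap the order of integration and substitute $y = x - Ru_1$ in the inner integral. Since $|u_1 - u_2| = 1$, the pushforward of Haar measure under $R \mapsto R(u_1 - u_2)$ is the uniform probability measure on $S^{d-1}$, so the expression collapses to $\int f_1(y)\, S^1 f_2(y)\, dy$. Hölder's inequality together with Stein's endpoint spherical $L^p$-improving $S^1\colon L^{\frac{d+1}{d}}\to L^{d+1}$ then yield the bound. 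The trivial endpoints $T\colon L^1\times L^\infty\to L^1$ and $T\colon L^\infty\times L^1\to L^1$ are immediate from Fubini; bilinear complex interpolation with this apex bound gives the ``if'' direction of the iff statement over the full convex hull. For the converse, testing with $f_1 = f_2 = \chi_{B(0,R)}$ as $R\to\infty$ forces $1/p + 1/q \geq 1$, and Knapp-type examples supported in thin caps near the unit sphere rule out $(1/p, 1/q)$ beyond the apex $(\tfrac{d}{d+1}, \tfrac{d}{d+1})$.

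For the first half of the theorem, interpolation reduces matters to the Hölder-line endpoint $T\colon L^{\frac{d+1}{d}}\times L^{\frac{d+1}{d}}\to L^{\frac{d+1}{2d}}$. A naive attempt using pointwise Cauchy--Schwarz, $T(f_1, f_2)(x)\leq [S^1(f_1^2)(x)]^{1/2}[S^1(f_2^2)(x)]^{1/2}$, followed by Hölder in $x$ reduces to needing $S^1\colon L^{\frac{d+1}{2d}}\to L^{\frac{d+1}{2d}}$, which fails for $d \geq 2$: concentrating $f$ on a spherical shell of width $\varepsilon$ yields $\|S^1 f\|_p / \|f\|_p \to \infty$ as $\varepsilon \to 0$ for any $p < 1$. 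To circumvent this I would use the slicing decomposition
\[
T(f_1, f_2)(x) = \int_{S^{d-1}} f_1(x - v)\, B_v f_2(x)\, d\sigma(v),
\]
where $B_v$ is the averaging operator over the $(d-2)$-sphere of points at unit distance from both $x$ and $x - v$. Applying Hölder over $S^{d-1}$ with carefully balanced exponents and combining the codimension-$2$ $L^p$-improving for $B_v$ (in the spirit of the Cook--Lyall--Magyar techniques cited in the introduction) with Stein's bound for $S^1$ should deliver the Hölder-line endpoint; interpolation with the $s = 1$ bound then covers the full range.

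The main obstacle is precisely this Hölder-line endpoint. The failure of $S^1$ on $L^p$ for $p < 1$ is what demands the sub-spherical refinement, and the delicate balancing of the codimension-$2$ $L^p$-improving for $B_v$ against Stein's bound for $S^1$ is the heart of the quasi-Banach argument; all remaining steps (the trivial endpoints, the interpolation, and the Knapp-type sharpness tests) are routine by comparison.
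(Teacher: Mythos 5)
Your treatment of the Banach part matches the paper: the identity $\|T(f_1,f_2)\|_{L^1}=\int |f_1|\,S^1(|f_2|)$, H\"older, and $S^1\colon L^{\frac{d+1}{d}}\to L^{d+1}$ give the apex bound, and the paper gets the full iff by the cleaner route of noting $\|T(f,g)\|_{L^1}=\langle f, S^1(g)\rangle$ for $f,g\ge 0$, so that $T\colon L^p\times L^q\to L^1$ is \emph{equivalent} to $S^1\colon L^p\to L^{q'}$ and both directions of the iff are imported wholesale from the known characterization for the spherical average; your large-ball and Knapp tests would reprove the necessity but are redundant given this duality.

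The genuine gap is in the quasi-Banach half. You correctly diagnose that pointwise Cauchy--Schwarz fails, but your proposed repair --- slicing $T$ over $v\in S^{d-1}$ into codimension-$2$ averages $B_v$ and ``carefully balancing exponents'' --- is not carried out, and it runs into exactly the obstruction that makes $s<1$ hard: $L^s$ is not normed, so you cannot apply Minkowski's or H\"older's inequality to the integral $\int_{S^{d-1}}(\cdots)\,d\sigma(v)$ to pass the $L^s$ quasi-norm inside. Nothing in your sketch explains how the contributions of the different slices are to be summed in $L^{\frac{d+1}{2d}}$. The paper's mechanism is entirely different and requires no further geometric input beyond the $L^1$ bound you already have: since $T$ vanishes unless $f_1,f_2$ are supported within bounded distance of each other and its output is supported near the supports of the inputs (conditions (L1)--(L2)), one decomposes $f_1,f_2$ over the unit cubes $Q_l$, $l\in\mathbb{Z}^d$, uses $\|\sum_l F_l\|_{L^s}^s\le\sum_l\|F_l\|_{L^s}^s$ for $s\le 1$ together with the local boundedness of the supports, then the embedding $\ell^r\hookrightarrow\ell^s$ for $r\le s$ (this is where $s\ge\frac{d+1}{2d}=r$ enters, using that $\frac{d}{d+1}+\frac{d}{d+1}>1$), and finally H\"older over the lattice to reassemble $\|f_i\|_{L^{(d+1)/d}}$. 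That is, the quasi-Banach bound is a formal consequence of the \emph{localized} $L^1$ bound being $L^p$-improving ($\frac{1}{p_1}+\frac{1}{p_2}>1$); this is the paper's Proposition \ref{p2}, adapted from Grafakos--Kalton. Without this (or some substitute for summing the slices), your endpoint $T\colon L^{\frac{d+1}{d}}\times L^{\frac{d+1}{d}}\to L^{\frac{d+1}{2d}}$ is unproven, and with it the whole first assertion of the theorem, since your interpolation step presupposes that endpoint.
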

\noindent For the simplex operators $S^k$ we establish the following $L^p$ improving bounds that hold in lower dimensions. 
\begin{theorem}
	In dimensions $d\geq k$,
	\begin{align*}
	&S^k \text{ is of restricted strong-type } \ts\prs{ k, \ldots, k, k },\nlspace
	&S^k \text{ is of restricted strong-type } \ts\prs{ k\frac{d+1}{d}, \ldots, k\frac{d+1}{d}, d+1 }.
	\end{align*}
\end{theorem}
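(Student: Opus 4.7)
Both restricted strong-type estimates will be obtained by a slicing argument that extends the Cook--Lyall--Magyar method \cite{CookLyallMagyar} to the low-dimensional range $d \geq k$. The base cases at $k=1$ are the sharp spherical $L^p$-improving bound $S^1 \colon L^{(d+1)/d}(\eun{d}) \to L^{d+1}(\eun{d})$ (for the second estimate) and the trivial Fubini bound $S^1 \colon L^1 \to L^1$ (for the first estimate). Parameterizing $R \in O(d)$ via its action on $u_1$ and writing $R = R_y R'$ with $R_y u_1 = y \in \sph{d-1}$ and $R' \in O(d-1)$ stabilizing $u_1$ yields the recursion
\[
S^k(f_1, \ldots, f_k)(x) = \int_{\sph{d-1}} f_1(x - y) \, B_y(f_2, \ldots, f_k)\bigl(x - \tfrac{1}{2}y\bigr) \, d\sigma(y),
\]
where $B_y$ is the $(k-1)$-linear rotational average of $f_2, \ldots, f_k$ over rotations (in $\{y\}^\perp$) of a fixed regular $(k-2)$-simplex sitting on the $(d-2)$-sphere of radius $\tfrac{\sqrt{3}}{2}$ inside that hyperplane.

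After a translation and a dilation, $B_y$ is morally a simplex-type averaging operator in dimension $d-1$; the hypothesis $d \geq k$ gives $d-1 \geq k-1$, so the inductive hypothesis applies to it. For the inductive step, I would apply Minkowski's integral inequality to move the $L^r_x$ norm inside the $\sigma$-integration, and then H\"older's inequality in $x$ to split $f_1(\cdot - y) \cdot B_y(\cdots)(\cdot - y/2)$ into two factors with exponents $(r_1, r_2)$. The inductive estimate controls the $L^{r_2}_x$ norm of $B_y(f_2, \ldots, f_k)$ by a product of $L^{p'_i}$ norms of $\chi_{E_i}$ for $i \geq 2$, uniformly in $y$; the $L^{r_1}_x$ norm of $f_1(\cdot - y)$ equals $\|\chi_{E_1}\|_{r_1}$ by translation invariance; and the resulting spherical integral in $y$ is handled either trivially (for the first estimate, where the $L^1 \to L^1$ bound for $S^1$ suffices) or via the sharp spherical $L^p$-improving bound applied to an appropriate dual formulation (for the second estimate). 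The H\"older exponents $r_1, r_2$ are chosen so that the inductive gain and the spherical gain combine precisely into the target endpoint gain.

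The central technical difficulty is two-fold. First, $B_y$ is not literally $S^{k-1}$ in one lower dimension: its reference point is the centroid of the remaining $(k-2)$-face rather than a vertex, and the simplex lies on a sphere of radius $\tfrac{\sqrt{3}}{2}$ rather than $1$, so the induction must be set up over a broader class of rotational simplex-on-sphere averages that contains both $S^k$ and the sliced operators $B_y$. Second, because $B_y$ acts on $\eun{d}$ rather than intrinsically on $\eun{d-1}$, converting the slice-wise inductive bound into a global $\eun{d}$-bound requires a Minkowski/Fubini step that must be carried out with care to keep the mixed-norm exponents aligned with the target single-$L^q$ exponents. The equi-spaced structure of the two target endpoints $(k, \ldots, k, k)$ and $(k(d+1)/d, \ldots, k(d+1)/d, d+1)$, together with the fact that both the ambient dimension and the simplex index decrease by $1$ at each inductive step, suggests that with the right inductive framework the exponent arithmetic telescopes cleanly.
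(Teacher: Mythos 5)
There is a genuine gap: the proposed inductive slicing scheme cannot produce the amount of $L^p$-improvement these endpoints require, and the obstruction is visible already in the exponent arithmetic. Once you apply Minkowski's inequality to pull the $L^r_x$ norm inside the $d\sigma(y)$-integration, you have discarded all smoothing coming from the outer spherical average; the only remaining sources of gain are the inner operator $B_y$ and H\"older. But H\"older requires $\frac{1}{r} = \frac{1}{r_1} + \frac{1}{r_2}$ with $r_1, r_2 \geq r$, while the inductive hypothesis puts $B_y(1_{E_2},\ldots,1_{E_k})$ into $L^{k-1}$ (first bound) or $L^{d}$ (second bound) --- in both cases an exponent \emph{strictly smaller} than the target $r = k$, resp.\ $r = d+1$ --- so the two factors cannot be recombined into the target space. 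Worse, in the critical case $d = k$ (which is the whole point of the theorem, as the paper notes that large $d$ is already covered by Cook--Lyall--Magyar), the inner operator for $k=2$, $d=2$ is the average over $\mathbb{S}^0$, i.e.\ the mean of two translates, which has no smoothing whatsoever; combined with the ``trivial'' $L^1 \to L^1$ treatment of the outer integral you propose for the first estimate, the total gain is zero, whereas restricted strong-type $(2,2,2)$ demands a gain of $\frac{1}{2}$ in the H\"older scale. The claim that ``the exponent arithmetic telescopes cleanly'' is therefore not just unverified but false for this scheme. (The mixed-norm issue you flag --- that the slicing direction $y$ varies over the sphere, so the slice-wise inductive bounds live in incompatible mixed norms --- is a second, independent obstruction.)

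The paper's actual arguments are quite different. The second bound is a one-line pointwise majorization: $S^k(1_{E_1},\ldots,1_{E_k}) \leq S^1(1_{E_1})$ (drop all factors but the one with the smallest set), apply $S^1 \colon L^{(d+1)/d} \to L^{d+1}$, and then redistribute $|E_1|^{d/(d+1)} \leq \prod_j |E_j|^{d/(k(d+1))}$ using $|E_1| \leq |E_j|$; no induction is needed. The first bound is where the real work lies: one raises $\|S^k(1_{E_1},\ldots,1_{E_k})\|_{L^k}$ to the $k$-th power to obtain $k$ \emph{independent} rotation variables $R_1,\ldots,R_k$, localizes on $(O(d))^k$ by compactness, keeps exactly one indicator factor per variable chosen so that the kept directions $R_1a_1,\ldots,R_ka_k$ are linearly independent (this is where $d \geq k$ enters), and then applies the inverse function theorem to the map $(y^1,\ldots,y^k) \mapsto (f_2(y^2)-f_1(Ay^1),\ldots,f_k(y^k)-f_1(Ay^1))$, whose nondegeneracy follows from transversality of the tangent planes at linearly independent points of the sphere. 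The essential mechanism --- extracting the gain from the joint transversality of $k$ independent copies of the sphere after raising to the $k$-th power --- is absent from your proposal, and some substitute for it would be needed to make any version of your induction close.
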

\noindent When $d$ is large the unrestricted version of the first of these bounds follows from the second bound by interpolation, but this is not the case when $d$ is close to $k$. 
In higher dimensions these bounds are contained in the range of bounds obtained by Cook, Lyall, and Magyar.
Our proof of the first bound is an adaptation of the proof given by Greenleaf, Iosevich, Krause, and Liu \cite{GreenleafIosevichKrauseLiu} in the case where $k = d = 2$, which can also be derived from the work of Stovall \cite{StovallThesis}. 

We also describe a technique for obtaining bounds into $L^r$ with $r< 1$ from $L^p$ improving bounds mapping into $L^1$.  As observed in \cite{GreenleafIosevichKrauseLiu}, for $f, g \geq 0$
\begin{align*}
	\norms{S^2(f,g)}_{L^1} = \angs{f, S^1(g)} = \angs{S^1(f), g}, 
\end{align*}
and hence by the well-known bounds for the spherical averaging operator $S^1$ we have that
\[
S^2 \cl L^p \tm L^q \ra L^1 \quad \text{ iff } \quad \ts\prs{\frac{1}{p}, \frac{1}{q}} \in \ts\conv\cbrks{(0,1), (1,0), \prss{\frac{d}{d+1}, \frac{d}{d+1}}}.
\]
This gives the bounds in Theorem \ref{t1},
which improve on bounds obtained in \cite{PalssonSovine}. As a further application of this technique we show that the bilinear spherical averaging operator
\[
B(f,g)(x):= \int_{\sph{2d-1}}f(x - u_1)g(x - u_2)\dx{\sigma(u_1,u_2)}
\]
maps $L^1 \tm L^1 \ra L^s$ for $s \in [1/2, 1]$ and $d\geq 2$. 


\section{Bounds of Cook, Lyall, and Magyar}
The following result was established in \cite{CookLyallMagyar}:
\begin{theorem}[Special case of Proposition 3 of Cook, Lyall, and Magyar \cite{CookLyallMagyar}]\label{p1}
	Let $k, m \geq 2$ be integers with $d \geq km$. Then the $k$-simplex averaging operator $S^k$ satisfies the bounds
	\[
	S^k(f_1, \ldots, f_k)(x) \leq C_{d, m, k}\prss{S\os{k-1}(|f_1|^q, \ldots, |f_{k-1}|^q)(x)}\os{\frac{1}{q}}\prs{S(|f_k|^q)}\os{\frac{1}{q}},
	\]
	uniformly for $x \in \eun{d}$, where $q = \frac{m}{m-1}$. Hence by induction,
	\[
	S^k(f_1, \ldots, f_k)(x) \leq C_{d,m,k}\prss{ S(|f_1|\os{q\os{k-1}})(x) }\os{\frac{1}{q\os{k-1}}}\prss{ S(|f_2|\os{q\os{k-1}})(x) }\os{\frac{1}{q\os{k-1}}}\prod_{j=3}^{k}\prss{ S(|f_j|\os{q\os{k+1-j}})(x) }\os{\frac{1}{q\os{k+1-j}}},
	\]
	uniformly for $x \in \eun{d}$. 
\end{theorem}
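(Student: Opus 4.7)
The natural approach is to exploit the fibre structure of $R \mapsto (Ru_1, \ldots, Ru_{k-1})$ on $O(d)$. Set $V := \sspan(u_1, \ldots, u_{k-1})$ and let $H \cong O(d - k + 1)$ be its pointwise stabiliser in $O(d)$, so $H$ fixes $u_1, \ldots, u_{k-1}$ and rotates $u_k$ on a sphere in $V^\perp$ of dimension $d - k$. Writing $R = R_1 h$ with $R_1$ in a transversal to $H$, the Haar measure factors as $d\mu(R) = dR_1\, dh$ and
\[
S\os{k}(f_1,\ldots,f_k)(x) \;=\; \int \prod_{i=1}^{k-1} f_i(x - R_1 u_i) \cdot A\os{R_1}(f_k)(x)\, dR_1,
\]
where $A\os{R_1}(f_k)(x) := \int_H f_k(x - R_1 h u_k)\, dh$ is an average of $f_k$ over a $(d-k)$-dimensional sub-sphere of the unit sphere centred at $x$.

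The next step is H\"older in $R_1$ with conjugate exponents $(q, q')$:
\[
S\os{k}(f_1,\ldots,f_k)(x) \;\leq\; \prss{\int \prod_{i=1}^{k-1} f_i(x - R_1 u_i)\os{q}\, dR_1}\os{1/q} \prss{\int A\os{R_1}(f_k)(x)\os{q'}\, dR_1}\os{1/q'}.
\]
The first factor equals $\prss{S\os{k-1}(|f_1|\os{q},\ldots,|f_{k-1}|\os{q})(x)}\os{1/q}$, because the pushforward of $d\mu(R)$ along $R \mapsto (Ru_1, \ldots, Ru_{k-1})$ is the very rotation measure defining $S\os{k-1}$.

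The crux is the bound
\[
\prss{\int A\os{R_1}(f_k)(x)\os{q'}\, dR_1}\os{1/q'} \;\leq\; C\prss{S(|f_k|\os{q})(x)}\os{1/q}.
\]
A naive Jensen bound requires $q' \leq q$, i.e.\ $q \geq 2$, and recovers only the Cauchy--Schwarz case $m = 2$. For $m > 2$ one must view $f_k \mapsto A\os{\cdot}(f_k)(x)$ as a sub-spherical averaging operator on the unit sphere $S(x,1)$ and establish a genuine $L\os{q} \to L\os{q'}$ improving bound for it. The hypothesis $d \ge km$ rewrites as $q \ge d/(d-k)$, which is precisely the critical exponent at which a Stein--Tomas / Littman-type estimate for averages over the $(d-k)$-dimensional spherical sections becomes available; this is the technical heart of the argument and the main obstacle. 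The second (iterated) inequality then follows by induction on $k$, applying the one-step bound to $S\os{k-1}(|f_1|\os{q}, \ldots, |f_{k-1}|\os{q})$ and continuing, which at each stage raises the exponents by a factor of $q$ on the functions still bound inside a simplex average.
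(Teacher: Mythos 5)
This theorem is not proved in the paper at all: it is imported verbatim as a special case of Proposition 3 of Cook, Lyall, and Magyar \cite{CookLyallMagyar}, so there is no in-paper argument to compare against. Your slicing skeleton does match the mechanism behind that result: factoring Haar measure over the stabilizer $H\cong O(d-k+1)$ of $\sspan(u_1,\dots,u_{k-1})$, recognizing the $H$-orbit of $u_k$ as a $(d-k)$-dimensional sub-sphere, applying H\"older in the base variable $R_1$ with exponents $(q,q')$, and identifying the first factor with $\prss{S^{k-1}(|f_1|^q,\dots,|f_{k-1}|^q)(x)}^{1/q}$ are all correct, as is the routine induction at the end.

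However, the proposal has a genuine gap, and you name it yourself: the estimate $\prss{\int A^{R_1}(f_k)(x)^{q'}\,dR_1}^{1/q'}\leq C\prss{S(|f_k|^q)(x)}^{1/q}$ is the entire content of the theorem beyond Cauchy--Schwarz (note that $1/q+1/q=2/q>1$ for $m>2$, so this is a ``super-H\"older'' inequality that can only come from curvature), and you do not prove it. Calling it ``the technical heart of the argument and the main obstacle'' and gesturing at ``a Stein--Tomas / Littman-type estimate'' is not a proof. Concretely, the operator $f\mapsto A^{R_1}(f)(x)$ is an average over a family of $(d-k)$-dimensional sub-spheres of $S^{d-1}$ indexed by a Stiefel-type parameter $R_1$; it is not a Euclidean convolution with surface measure on a hypersurface, so the standard Littman bound $L^{(n+1)/n}\to L^{n+1}$ does not apply off the shelf, and your assertion that $q\geq d/(d-k)$ (equivalent to $d\geq km$, which you verify correctly) is ``precisely the critical exponent'' for such an estimate is unsubstantiated --- you would need to state the precise $L^q(\sigma_{S^{d-1}})\to L^{q'}(dR_1)$ mapping property for this Radon-type transform on the sphere, prove it, and check that the hypothesis $d\geq km$ places $q=m/(m-1)$ in its admissible range. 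Until that lemma is formulated and established, the proposal is an outline of the Cook--Lyall--Magyar strategy rather than a proof.
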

\noindent Combining this with H\"older's inequality yields the following range of $L^p$ bounds for $S^k$, which includes near-optimal non-trivial bounds.
\begin{corollary}\label{cc1}
	The operator $S^k$ satisfies the bounds $$S^{k}\cl L\os{p_{\sigma(1)}} \tm \cdots \tm L\os{p_{\sigma(k)}} \ra L^r$$
	for all exponents satisfying
	$p_1 \geq q\os{k-1}$,\, $p_j \geq q\os{k+1 - j}$ for $j \geq 2$, and $\frac{1}{p_1} + \cdots + \frac{1}{p_k} = \frac{1}{r}$, whenever $k, m \geq 2$, $d \geq mk$, and $q = \frac{m}{m-1}$, for all permutations $\sigma$ of $\{1, \ldots, k\}$. Hence by interpolation
	\[
	S^k \cl L\os{kr}\tm\cdots\tm L\os{kr} \ra L^r \quad \text{ with } \quad r = \frac{q\os{k-1}}{2 + q + q^2 + \ldots + q\os{k-2}}.
	\]
\end{corollary}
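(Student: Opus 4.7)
My plan is to combine the pointwise bound of Theorem \ref{p1} with H\"older's inequality and the trivial fact that $S = S^1$ is a contraction on $L^s$ for every $s \geq 1$ (being an averaging operator against a probability measure). Setting $q_1 = q_2 = q^{k-1}$ and $q_j = q^{k+1-j}$ for $3 \leq j \leq k$, I take $L^r$-norms in the pointwise estimate and apply H\"older's inequality with exponents $p_j$ satisfying $\sum_{j=1}^k 1/p_j = 1/r$, which gives
\[
\norms{S^k(f_1, \ldots, f_k)}_{L^r} \leq C \prod_{j=1}^k \norms{S(|f_j|^{q_j})}_{L^{p_j/q_j}}^{1/q_j}.
\]
The hypotheses $p_1 \geq q^{k-1}$ and $p_j \geq q^{k+1-j}$ for $j \geq 2$ amount exactly to $p_j \geq q_j$, so $p_j/q_j \geq 1$, and the trivial $L^{p_j/q_j}$-bound on $S$ reduces the $j$th factor to $\norms{f_j}_{L^{p_j}}$. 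This proves the first claim for the identity permutation.

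For a general permutation $\sigma$, I would appeal to the symmetry of $S^k$ in its arguments. Choose $R_\sigma \in O(d)$ with $R_\sigma u_j = u_{\sigma(j)}$; such a rotation exists because the $u_j$ are the vertices of a regular $k$-simplex (in a $k$-dimensional subspace of $\eun{d}$) and $d \geq k$. The substitution $R \mapsto R R_\sigma^{-1}$ in the Haar integral, together with the invariance of $\mu$ under right multiplication, yields $S^k(f_{\sigma(1)}, \ldots, f_{\sigma(k)}) = S^k(f_1, \ldots, f_k)$, so the bound for the $\sigma$-permuted exponents follows at once.

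For the diagonal bound $S^k \colon L^{kr} \times \cdots \times L^{kr} \to L^r$, I would interpolate among the endpoint bounds just obtained. The distinguished endpoint vector $(1/q^{k-1}, 1/q^{k-1}, 1/q^{k-2}, \ldots, 1/q)$ has coordinate sum
\[
\frac{2}{q^{k-1}} + \sum_{i=1}^{k-2} \frac{1}{q^i} = \frac{2 + q + q^2 + \cdots + q^{k-2}}{q^{k-1}} = \frac{1}{r},
\]
so all $k!$ permuted endpoints share the target $L^r$ and lie on the hyperplane $\sum 1/p_j = 1/r$. Averaging these exponent vectors uniformly over the permutation group produces $(1/(kr), \ldots, 1/(kr))$, and multilinear Riesz--Thorin interpolation yields the claimed symmetric bound. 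Since $S$ enters only through its trivial $L^s \to L^s$ boundedness, there is no genuine analytic obstacle beyond Theorem \ref{p1}; the main task is the index bookkeeping that matches $q_j$ with the stated lower bounds on $p_j$ and verifies the algebraic identity for $1/r$ at the endpoint.
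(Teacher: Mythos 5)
Your argument is correct and is exactly the proof the paper intends (the paper only says "combining this with H\"older's inequality"): the pointwise majorization of Theorem \ref{p1}, H\"older on the hyperplane $\sum 1/p_j = 1/r$, the contraction property of $S$ on $L^{p_j/q_j}$ with $p_j/q_j \geq 1$, symmetry under permutations, and averaging the permuted endpoints to reach the diagonal. The index bookkeeping checks out, including the identity $2/q^{k-1} + \sum_{i=1}^{k-2} q^{-i} = 1/r$.
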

\noindent These bounds are asymptotically optimal as $m$, and hence $d$, increases. 
Combining Theorem \ref{p1} with bounds for the spherical average gives strong $L^p$-improving bounds, for example,
\begin{corollary}\label{cc2}
 The operator $S^k$ satisfies the bounds
\[
S^k\cl L\os{p_{\sigma(1)}} \tm \cdots \tm L\os{p_{\sigma(k)}} \ra L^r, \quad \text{ where } \quad r = \frac{q\os{k-1}(d+1)}{2 + q + q^2 + \ldots + q\os{k-2}},
\]
where $p_1 = q\os{k-1}\frac{d+1}{d}$ and $p_j = q\os{k+1-j}\frac{d+1}{d}$ for $j \geq 2$, for each permutation $\sigma$.
Hence by interpolation
\[
S^k\cl L\os{\frac{kr}{d}}\tm\cdots \tm L\os{\frac{kr}{d}} \ra L^r,
\]
for all $m\geq 2$ and $d\geq mk$. 
\end{corollary}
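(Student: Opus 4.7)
\textbf{Proof plan for Corollary \ref{cc2}.} The strategy is to combine the pointwise product estimate from Theorem \ref{p1} with the classical sharp $L^p$-improving bound $S\cl L\os{(d+1)/d}(\eun{d})\to L\os{d+1}(\eun{d})$ for the spherical averaging operator (Littman/Strichartz/Stein), applied factor-by-factor through Hölder's inequality in the target. Concretely, I first invoke Theorem \ref{p1} to write, for $d\geq mk$,
\[
S^k(f_1,\ldots,f_k)(x)\leq C\,\prss{S(|f_1|\os{q\os{k-1}})(x)}\os{1/q\os{k-1}}\prss{S(|f_2|\os{q\os{k-1}})(x)}\os{1/q\os{k-1}}\prod_{j=3}^{k}\prss{S(|f_j|\os{q\os{k+1-j}})(x)}\os{1/q\os{k+1-j}}.
\]
Writing $q_1 = q_2 = q\os{k-1}$ and $q_j = q\os{k+1-j}$ for $j\geq 3$, the right-hand side is a product of the form $\prod_j \prss{S(|f_j|\os{q_j})(x)}\os{1/q_j}$.

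Next I take $L^r$ norms and split by Hölder with exponents $r_j := q_j(d+1)$, noting that $\norm{(S(|f_j|\os{q_j}))\os{1/q_j}}_{r_j} = \norm{S(|f_j|\os{q_j})}_{d+1}\os{1/q_j}$. The sharp spherical-average bound then gives $\norm{S(|f_j|\os{q_j})}_{d+1}\os{1/q_j} \leq C\norm{|f_j|\os{q_j}}_{(d+1)/d}\os{1/q_j} = C\norm{f_j}_{q_j(d+1)/d}$, which identifies $p_j = q_j(d+1)/d$, matching the values claimed. The Hölder compatibility $\sum_j 1/r_j = 1/r$ then determines $r$ via the geometric-series identity $\sum_j 1/q_j = (2 + q + q^2 + \cdots + q\os{k-2})/q\os{k-1}$, giving precisely the value of $r$ in the statement.

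For a permutation $\sigma$, I use that the group $O(d)$ contains the vertex-permuting symmetries of the regular $k$-simplex stabilizing $u_0 = 0$ (a copy of the symmetric group $S_k$ acting on $u_1,\ldots,u_k$), so $S^k$ is symmetric in its $k$ arguments; relabeling in the pointwise bound yields the estimate for any $\sigma$. For the diagonal conclusion, summing the exponents gives $\sum_j 1/p_j = d/r$, so the arithmetic mean of the $1/p_j$ equals $d/(kr)$. Multilinear Riesz--Thorin interpolation over the $k!$ permutations then delivers $S^k\cl L\os{kr/d}\tm\cdots\tm L\os{kr/d}\to L^r$. The entire argument is conceptually routine once Theorem \ref{p1} and the sharp spherical bound are granted; the only real obstacle is the bookkeeping — verifying that the Hölder exponents $r_j$, the source exponents $p_j$, and the diagonal exponent $kr/d$ all line up with the closed-form expression for $r$ — and this is a short direct calculation.
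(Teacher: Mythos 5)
Your proposal is correct and is exactly the argument the paper intends (the paper gives no written proof beyond the remark that the corollary follows by ``combining Theorem \ref{p1} with bounds for the spherical average''): majorize via the iterated pointwise bound of Theorem \ref{p1}, apply H\"older in the target with exponents $r_j = q_j(d+1)$ together with the sharp spherical bound $S\cl L^{(d+1)/d}\to L^{d+1}$, and interpolate over permutations for the diagonal statement. Your exponent bookkeeping ($\sum_j 1/q_j = (2+q+\cdots+q^{k-2})/q^{k-1}$, hence $\sum_j 1/p_j = d/r$) and the symmetry justification via the stabilizer of $u_0$ in the simplex's symmetry group both check out.
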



\section{Background}

The triangle averaging operator was introduced in dimension $d = 2$ by Greenleaf and Iosevich in \cite{GreenleafIosevichTriangle}, where Sobolev bounds for $T$ were obtained and applied to a generalization of the Falconer distance problem. 
Greenleaf, Iosevich, Krause, and Liu \cite{GreenleafIosevichKrauseLiu} showed that in dimension $d=2$ a family of operators including $T$ satisfies $L^p \tm L^q \ra L^r$ bounds for $(\frac{1}{p}, \frac{1}{q}, \frac{1}{r})$ in the set $\{ (\frac{2}{3}, \frac{2}{3}, 1), (\frac{2}{3}, 0, \frac{1}{3}), (0, \frac{2}{3}, \frac{1}{3}) \}$ and a restricted strong-type bound for $(\frac{1}{p}, \frac{1}{q}, \frac{1}{r}) = (\frac{1}{2}, \frac{1}{2}, \frac{1}{2})$, and showed that these $L^p$ improving bounds are sharp in the Banach range. These bounds can also be derived from the work of Stovall \cite{StovallThesis}.
In~\cite{PalssonSovine} Palsson and Sovine studied the $L^{p} \tm L\os{p} \ra L^r$ boundedness of $T(f,g)$ using a frequency-space decomposition and obtained quasi-Banach bounds in higher dimensions. Cook, Lyall, and Magyar~\cite{CookLyallMagyar} established bounds for maximal averages with respect to general non-degenerate $k$-simplices using the majorization technique described above.



\section{Quasi-Banach Bounds from $L^p$-Improving Bounds into $L^1$}\label{s1}

In \cite{GrafakosKalton} Grafakos and Kalton show that the operator
\[
I(f,g)(x) := \int_{|t|\leq 1} f(x - t) g(x + t)\dx{t}
\]
is bounded on $L^1(\eun{d}) \tm L^1(\eun{d}) \ra L\os{1/2}(\eun{d})$. 
We show how their argument can be adapted to a slightly more general situation.  
In the following, for $l = (l_1, \ldots, l_d) \in \znm^d$ we denote by $Q_l$ the cube with side length 1 and lower left corner at $l$.

Suppose that the $k$-linear operator $U(f_1, \ldots, f_k)$ 
has the following localization properties: 
\begin{enumerate}
	\item[(L1)] There is a finite number $N$ such that $U(f_1, \ldots, f_k) \equiv 0$ whenever there are $i,j$ with $f_i, f_j$ supported on cubes $Q_{l^i}, Q_{l^j}$ with $\norm{l^i - l^j}_\nf := \max_{1\leq n \leq d}|(l^i)_n - (l^j)_n| > N$.\\
	\item[(L2)] There is a fixed $R > 0$ such that $U(f_1, \ldots, f_k)(x)$ is supported on
	$
	\bigcup_{i=1}^k\spt(f_i) + B(0, R).
	$
\end{enumerate}
It is easy to see that each of the $k$-simplex averaging operators $S^k$ and the bilinear spherical averaging operator satisfy conditions L1 and L2.
Now suppose that whenever each $f_i$ is supported on a cube of side length 1 we have the bound
\[
\norm{U(f_1, \ldots, f_k)}_{L^1(\eun{d})} \leq A\norm{f_1}_{L\os{p_1}(\eun{d})}\cdots \norm{f_k}_{L\os{p_k}(\eun{d})} \numberthis\label{e1}
\]
for some exponents with 
\[
\frac{1}{p_1} + \cdots + \frac{1}{p_k} =: \frac{1}{r} > 1.
\]
We define $F_N := \{ l \in \znm^d \cl \norm{l}_\nf \leq N \}$. 
Then by properties L1 and L2 we have for each $s \in [r, 1]$,
\begin{align*}
\norm{U(f_1, \ldots, f_k)}_{L^s} &=  \prs{\int_{\eun{d}} \abs{ \sum_{l\in \znm^d}\sum_{d_2,\ldots, d_k\in F_N}
		U(f_11_{Q_l}, f_21_{Q_l + d_2} \ldots, f_k1_{Q_l + d_k})(x)}^s\dx{x} }\os{\frac{1}{s}}
\nlspace
&\leq C\sum_{d_2,\ldots, d_k\in F_N}
\prs{\int_{\eun{d}} \abs{ \sum_{l\in \znm^d}
		U(f_11_{Q_l}, f_21_{Q_l + d_2} \ldots, f_k1_{Q_l + d_k})(x)}^s\dx{x} }\os{\frac{1}{s}}
\nlspace
&\leq C\sum_{d_2,\ldots, d_k\in F_N}
\prs{\int_{\eun{d}} \sum_{l\in \znm^d} \abs{
		U(f_11_{Q_l}, f_21_{Q_l + d_2} \ldots, f_k1_{Q_l + d_k})(x)}^s\dx{x} }\os{\frac{1}{s}}
\nlspace
&\leq C\sum_{d_2,\ldots, d_k\in F_N} \prs{\sum_{l\in \znm^d} \norm{ U(f_11_{Q_l}, f_21_{Q_l + d_2} \ldots, f_k1_{Q_l + d_k}) }_{L^1}^s }\os{\frac{1}{s}}\nlspace
%
%
&\leq C\sum_{d_2,\ldots, d_k\in F_N}
\prs{ \sum_{l\in\znm^d}
	\norm{f_11_{Q_l}}^s_{L\os{p_1}}\norm{f_21_{Q_l + d_2}}^s_{L\os{p_2}}\cdots \norm{f_k1_{Q_l + d_k}}^s_{L\os{p_k}} }\os{\frac{1}{s}}\nlspace
&\leq C\sum_{d_2,\ldots, d_k\in F_N}
\prs{ \sum_{l\in\znm^d}
	\norm{f_11_{Q_l}}^r_{L\os{p_1}}\norm{f_21_{Q_l + d_2}}^r_{L\os{p_2}}\cdots \norm{f_k1_{Q_l + d_k}}^r_{L\os{p_k}} }\os{\frac{1}{r}}
\nlspace
&\leq C \prs{ \sum_{l\in \znm^d}\norm{f_11_{Q_l}}\os{p_1}_{L\os{p_1}} }\os{\frac{1}{p_1}}\cdots 
\prs{ \sum_{l\in \znm^d}\norm{f_k1_{Q_l}}\os{p_k}_{L\os{p_k}} }\os{\frac{1}{p_1}}
\nlspace
&= C\norm{f_1}_{L\os{p_1}}\cdots \norm{f_k}\us{L\os{p_k}},
%
\end{align*}
where the constant depends on $N$, $R$, $d$, $s$, and $A$. We summarize this result in the following proposition.

\begin{proposition}\label{p2}
	Suppose that the $k$-linear operator $U(f_1, \ldots, f_k)$ satisfies the localization conditions (L1) and (L2) and that $
	\norm{U(f_1, \ldots, f_k)}_{L^1(\eun{d})} \leq A\norm{f_1}_{L\os{p_1}(\eun{d})}\cdots \norm{f_k}_{L\os{p_k}(\eun{d})}
	$
	for some exponents $p_1, \ldots, p_k \geq 1$ with 
	$
	\frac{1}{p_1} + \cdots + \frac{1}{p_k} =: \frac{1}{r} > 1
	$ whenever each $f_i$ is supported on a cube. Then for each $s \in [r, 1]$
	\[
	U \cl L\os{p_1}(\eun{d}) \tm \cdots \tm L\os{p_k}(\eun{d}) \ra L\os{s}(\eun{d}). 
	\]
\end{proposition}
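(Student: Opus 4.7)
The plan is to decompose each $f_i$ via the partition of $\eun{d}$ into unit cubes, apply the hypothesized $L^1$ bound to each local piece, and reassemble the global $L^s$ norm using the two localization properties together with the quasi-Banach inequalities available for $s \leq 1$. Writing $f_i = \sum_{l^i \in \znm^d} f_i 1_{Q_{l^i}}$ and expanding $U$ by multilinearity, property (L1) forces every contributing tuple $(l^1, \ldots, l^k)$ to be pairwise close in $\ell^\infty$, so the surviving sum can be reparametrized as $l^1 = l$, $l^i = l + d_i$ with $(d_2, \ldots, d_k) \in F_N^{k-1}$ a finite set of shifts and $l \in \znm^d$ arbitrary.

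Next I would pass the $L^s$ norm through both sums. The outer, finite sum over shift tuples is extracted via the quasi-triangle inequality for $L^s$, valid for $s \in (0,1]$ up to a constant depending only on $N, k, d, s$. The inner, infinite sum over $l$ is treated differently: the pointwise inequality $\abs{\sum_l g_l}^s \leq \sum_l \abs{g_l}^s$, valid for $0 < s \leq 1$, integrates to $\norm{\sum_l g_l}_{L^s}^s \leq \sum_l \norm{g_l}_{L^s}^s$ for each fixed shift tuple, where $g_l := U(f_1 1_{Q_l}, f_2 1_{Q_{l+d_2}}, \ldots, f_k 1_{Q_{l+d_k}})$. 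Property (L2), combined with the boundedness of the $d_i$, guarantees that the support of $g_l$ has measure bounded by a constant $V = V(N, R, d, k)$, so Hölder yields $\norm{g_l}_{L^s} \leq V^{1/s - 1}\norm{g_l}_{L^1}$; the hypothesis then produces $\norm{g_l}_{L^1} \leq A\prod_{i=1}^k \norm{f_i 1_{Q_{l+d_i}}}_{L^{p_i}}$ with the convention $d_1 = 0$.

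After these substitutions the problem is reduced to
\[
\norm{U(f_1, \ldots, f_k)}_{L^s} \lesssim \sum_{d_2,\ldots, d_k \in F_N} \prs{\sum_{l \in \znm^d} \prod_{i=1}^k \norm{f_i 1_{Q_{l+d_i}}}_{L^{p_i}}^s}^{1/s}.
\]
The endgame is the $\ell^s \hookrightarrow \ell^r$ embedding (costless since $r \leq s$), followed by Hölder's inequality on the $l$-sum with exponents $p_i/r$ summing to $1$ (which is precisely the hypothesis $\sum 1/p_i = 1/r$); this separates the product into $k$ independent $\ell^{p_i}$ sums, and since for each fixed $d_i$ the shifted family $\{Q_{l+d_i}\}_l$ still tiles $\eun{d}$, each such sum reassembles into $\norm{f_i}_{L^{p_i}}^{p_i}$, producing the desired bound.

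The main obstacle is the careful bookkeeping of the quasi-Banach inequalities: the finite sum over shifts must be handled by the quasi-triangle inequality for $L^s$, the infinite sum over cube indices must instead be handled by pointwise $x^s$-subadditivity, and (L2) must be invoked at exactly the right moment to trade $\norm{g_l}_{L^s}$ for $\norm{g_l}_{L^1}$ so that hypothesis (\ref{e1}) can be applied. Once this sequencing is fixed, the remaining steps are standard $\ell^p$-Hölder manipulations.
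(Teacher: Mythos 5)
Your proposal is correct and follows essentially the same route as the paper: decompose over unit cubes, use (L1) to reduce to finitely many shift tuples, pull the finite sum out by the quasi-triangle inequality, handle the $l$-sum by pointwise $s$-subadditivity, invoke (L2) to pass from $L^s$ to $L^1$ on each piece, and finish with the $\ell^s\hookrightarrow\ell^r$ embedding and H\"older with exponents $p_i/r$. The sequencing and the role of each hypothesis match the paper's argument exactly.
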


Note that the bilinear convolution operator $T_\mu$ associated to any compactly supported finite Borel measure on $\eun{2d}$ satisfies the localization conditions (L1) and (L2). The following proposition is an abstract version of the technique used to obtain the bound $I\cl L^1 \tm L^1 \ra L\os{\frac{1}{2}}$ of Grafakos and Kalton \cite{GrafakosKalton} and our result below on the boundedness of $T$. 
\begin{proposition}\label{p3}
	Let $\mu$ be a compactly supported finite positive Borel measure on $\eun{2d}$ such that the pushforward measure
	\[
	\mu_{(-)}(A) := \int_{\eun{2d}}1_A(y - z)\dx{\mu(y,z)}
	\]
	on $\eun{d}$ is absolutely continuous with density $\frac{d\mu_{(-)}}{dt} \in L^\nf$. Then 
	\[
	T_\mu \cl L^1(\eun{d}) \tm L^1(\eun{d}) \ra L\os{1}(\eun{d}),
	\]
	and thus by Proposition \ref{p2}
	\[
	T_\mu \cl L^1(\eun{d}) \tm L^1(\eun{d}) \ra L\os{s}(\eun{d}),
	\]
	for $s \in [\frac{1}{2}, 1]$.
	
	If $T_{\mu_{(-)}}\cl L^p \ra L^q$ with $q > p$, then
	\[
	T_\mu \cl L^{q'}(\eun{d}) \tm L^p(\eun{d}) \ra L\os{1}(\eun{d}),
	\]
	and thus by Proposition \ref{p2}
	\[
	T_\mu \cl L^{q'}(\eun{d}) \tm L^p(\eun{d}) \ra L\os{s}(\eun{d}),
	\]
	for $s \in [\frac{pq'}{p + q'}, 1]$.
\end{proposition}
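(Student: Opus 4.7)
The plan is to reduce both parts of the proposition to $L^1$ estimates via Fubini's theorem, then invoke Proposition~\ref{p2} to extend to the full quasi-Banach range. First note that $T_\mu$ satisfies the localization conditions (L1) and (L2): if $\supp(\mu) \subseteq B(0,R_0) \subseteq \eun{2d}$, then $T_\mu(f,g)(x) \neq 0$ forces $x$ to lie within distance $R_0$ of both $\supp(f)$ and $\supp(g)$, which yields both localization properties with constants depending only on $R_0$ and $d$.

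The core step is the identity
\begin{align*}
\|T_\mu(f,g)\|_{L^1}
&= \int\!\!\int f(x-y)\,g(x-z)\,d\mu(y,z)\,dx \\
&= \int\!\!\int f(u)\,g(u+(y-z))\,du\,d\mu(y,z) \\
&= \int\!\!\int f(u)\,g(u+t)\,du\,d\mu_{(-)}(t),
\end{align*}
obtained for $f,g\geq 0$ by Fubini, the substitution $u = x - y$, and the defining property of the pushforward $\mu_{(-)}$; for general $f,g$ the pointwise bound $|T_\mu(f,g)| \leq T_\mu(|f|,|g|)$ (valid because $\mu \geq 0$) reduces to the nonnegative case. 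For Part 1, since $\phi := d\mu_{(-)}/dt \in L^\infty$, bounding $\phi \leq \|\phi\|_\infty$ and applying Tonelli gives $\|T_\mu(f,g)\|_{L^1} \leq \|\phi\|_\infty \|f\|_{L^1}\|g\|_{L^1}$, and Proposition~\ref{p2} with $p_1 = p_2 = 1$ (hence $r = 1/2$) yields the claimed bound for $s \in [1/2, 1]$.

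For Part 2, I would integrate out the $t$-variable first in the identity to write
\[
\|T_\mu(f,g)\|_{L^1} = \int f(u)\,\Bigl(\int g(u+t)\,d\mu_{(-)}(t)\Bigr)\,du = \int f(u)\,T_{\check\mu_{(-)}}g(u)\,du,
\]
where $\check\mu_{(-)}$ is the pushforward of $\mu_{(-)}$ under $t \mapsto -t$. A direct computation gives the factorization $T_{\check\nu} = R\,T_\nu\,R$ with $R$ the reflection $Rh(x) = h(-x)$; since $R$ is an isometry on every $L^p$, the operators $T_{\check\mu_{(-)}}$ and $T_{\mu_{(-)}}$ have the same $L^p \to L^q$ operator norm. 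By Hölder's inequality,
\[
\|T_\mu(f,g)\|_{L^1} \leq \|f\|_{L^{q'}}\,\|T_{\check\mu_{(-)}}g\|_{L^q} \leq C\,\|f\|_{L^{q'}}\|g\|_{L^p}.
\]
Applying Proposition~\ref{p2} with $p_1 = q'$ and $p_2 = p$ gives $1/r = 1/q' + 1/p = 1 + 1/p - 1/q > 1$, using the hypothesis $q > p$; hence $r = pq'/(p+q')$ and the quasi-Banach bounds hold for $s \in [r, 1]$.

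The argument is essentially bookkeeping once Proposition~\ref{p2} is in hand, with no real obstacle. The only delicate point is keeping track of the order of the arguments in Part 2 when $\mu$ is not symmetric in $(y,z)$, which is resolved by choosing to integrate $t$ first in the key identity and invoking the reflection factorization above.
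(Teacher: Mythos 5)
Your proof is correct and takes essentially the same route as the paper's: Tonelli plus a change of variables reduces $\norms{T_\mu(f,g)}_{L^1}$ to an integral against the pushforward $\mu_{(-)}$, after which Part 1 follows from the $L^\nf$ bound on the density and Part 2 from H\"older together with the $L^p \ra L^q$ bound for $T_{\mu_{(-)}}$, with Proposition~\ref{p2} supplying the quasi-Banach range in both cases. Your extra care with the reflection $\check\mu_{(-)}$ addresses a sign the paper elides, but since reflection is an $L^p$-isometry this is a cosmetic refinement rather than a different argument.
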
 

\begin{proof}
	Suppose that $\mu_{(-)}$ is absolutely continuous with $L^\nf$ density. 
	This proof is essentially the same as the one given by Grafakos and Kalton to bound $I$. We have
	\begin{align*}
	\norm{T_\mu(f,g)}_{L^1} &\leq \int_{\eun{d}}\int_{\eun{2d}}|f(x - u)||g(x - v)|\dx{\mu(u,v)}\dx{x}\nlspace
	&= \int_{\eun{d}}|f(x)|\int_{\eun{2d}}|g(x - (u - v))|\dx{\mu(u,v)}\dx{x}\nlspace
	&= \int_{\eun{d}}|f(x)|\int_{\eun{d}}|g(x - t)|\dx{\mu_{(-)}(t)}\dx{x}\nlspace
	&= \int_{\eun{d}}|f(x)|\int_{\eun{d}}|g(x - t)|\frac{d\mu_{(-)}}{dt}\dx{t}\dx{x}\nlspace
	&\leq \norms{{d\mu_{(-)}}/{dt}}_{L^\nf}\norm{f}_{L^1}\norm{g}_{L^1}. 
	\end{align*}
	We can now apply Proposition \ref{p2}.
	
	Now suppose that $T_{\mu_{(-)}}\cl L^p \ra L^q$. Then we have 
	\begin{align*}
	\norm{T_\mu(f,g)}_{L^1} &\leq \int_{\eun{d}}|f(x)|\int_{\eun{d}}|g(x - t)|\dx{\mu_{(-)}(t)}\dx{x}\nlspace
	&\leq \norm{f}_{L\os{q'}}\norm{T_{\mu_{(-)}}(g)}_{L^q}\nlspace
	&\leq C\norm{f}_{L\os{q'}}\norm{g}_{L^p}. \\[-8mm]
	\end{align*}
\end{proof}


\vspace*{-1cm}

\section{Applications to the Bilinear Spherical and Triangle Averaging Operators}\label{s2}

\subsection{Application to triangle averaging operator}

We will use the $L^p$ improving bound $S^1 \cl L\os{d + 1} \ra \frac{d+1}{d}$ for the spherical averaging operator to estimate the $L^1$ norm of $T = S^2$. By Tonelli's theorem and a change of variables we have
\begin{align*}
\norms{T(f_1, f_2)}_{L^1} &\leq \int_{SO(d)}\int_{\eun{d}} \abs{f_1(x - Ru_1)}\abs{f_2(x - Ru_2)}\dx{x}\dx{\mu(R)}\nlspace
&= \int_{\eun{d}}\abs{f_1(x)}\int_{SO(d)}\abs{f_2(x - R(u_2 - u_1))}\dx{\mu(R)}\dx{x}\nlspace
&= \norm{f_1(x)S^1(|f_2|)(x)}_{L^1}\nlspace
&\leq \norm{f_1}_{L\os{\frac{d+1}{d}}}\norm{S^1(|f_2|)}_{L\os{d+1}}\nlspace
&\leq C\norm{f_1}_{L\os{\frac{d+1}{d}}}\norm{f_2}_{L\os{\frac{d+1}{d}}}.
\end{align*}
Now it follows by Proposition \ref{p1} that
\[
T \cl L\os{\frac{d+1}{d}}\tm L\os{\frac{d+1}{d}} \ra L\os{s}
\]
for all $s \in [\frac{d+1}{2d}, 1]$. This argument was previously used in \cite{GreenleafIosevichKrauseLiu}.

In fact, the reasoning above shows that for $f, g \geq 0$, 
\[
\norm{T(f,g)}_{L^1} = \angs{S(f), g} = \angs{f, S(g)}.
\]
It follows by $L^p$ duality that
\[
T\cl L^p \tm L^q \ra L^1 \quad \text{ if and only if } \quad S\cl L^p \ra L\os{q'}. 
\]
Hence from the known range of bounds for the spherical averaging operator (see for example \cite{LaceySparseSphericalMaximal}) we have that $T\cl L^p \tm L^q \ra L^1$ if and only if $(\frac{1}{p}, \frac{1}{q})$ lies in the region shown in Figure~\ref{fig1}. Thus the bounds $T\cl L^p \tm L^q \ra L^r$ with $\frac{1}{p} + \frac{1}{q} = \frac{1}{r} \geq 1$ that can be obtained by applying Proposition \ref{p2} are exactly those with $(\frac{1}{p}, \frac{1}{q})$ in the region shown in Figure~\ref{fig1}.
The essential new $T\cl L^p \tm L^q \ra L\os{\frac{pq}{p+q}}$ bound in this range is the one with $(\frac{1}{p}, \frac{1}{q}) = (\frac{d}{d+1}, \frac{d}{d+1})$, since the others can be obtained from this one by interpolation with bounds in the Banach range.

\begin{center}
	\begin{tikzpicture}[scale=0.5]
	\draw[color=gray!50, fill=gray!20, style = solid] (6,0) -- (40/8,40/8) -- (0,6) -- cycle;
	\draw[color=black] (-1,0) -- (9,0);
	\draw[color=black] (0,-1) -- (0,9);
	
	
	\node[above right=1pt of {(40/8,40/8)}, outer sep=2pt] {\small$(\frac{d}{d+1},\frac{d}{d+1})$};
	
	\node[below=2pt of {(6,0)}, outer sep=2pt] {\small(1,0)};
	
	\node[below left=2pt of {(0,6.5)}, outer sep=2pt] {\small(0,1)};
	
	\node[below left=2pt of {(0,0)}, outer sep=2pt] {\small(0,0)};
	
	\node[right=2pt of {(9,0)}, outer sep=2pt] {$\frac{1}{p}$};
	
	\node[left=2pt of {(0,9)}, outer sep=2pt] {$\frac{1}{q}$};
	\end{tikzpicture}
	\captionof{figure}{\color{black} Pairs $(\frac{1}{p}, \frac{1}{q})$ for which $T\cl L^p \tm L^q \ra L^1$. }\label{fig1}
\end{center} 


\subsection{Application to bilinear spherical averaging operator}
Recall that the bilinear spherical averaging operator is defined by
that the bilinear spherical averaging operator
\[
B(f,g)(x):= \int_{\sph{2d-1}}f(x - u_1)g(x - u_2)\dx{\sigma(u_1,u_2)},
\]
where $\sigma$ is the surface measure on the unit sphere in $\eun{2d}$. Multilinear spherical convolutions of this type were first introduced by Daniel Oberlin in the case where $d = 1$ \cite{OberlinMultilinearSpherical}. A complete characterization of $L^p$ bounds for these operators in the case of $d = 1$ was recently obtained by Shrivastava and Shuin \cite{ShrivastavaShuinMultilinearSpherical}. Here we address the case where $d \geq 2$ and show that $B\cl L^1 \tm L^1 \ra L^s$ for $s \in [1/2, 1]$. Jeong and Lee \cite{JeongLee} recently completely characterized the $L^p$ boundedness of the maximal version of the operator $B$ using a slicing technique; our approach in this section bears some resemblance to the slicing technique used by Jeong and Lee.

Let $d \geq 2$.
Let $\mc{D} := \{ (x, x) \cl  x \in \eun{d}\}$ be the diagonal subspace of $\eun{2d}$ and $\mc{A} := \{ (x, -x) \cl  x \in \eun{d}\}$ the antidiagonal subspace, and notice that these subspaces decompose $\eun{2d}$ orthogonally. Then for two points $(a, b), (c, d) \in \eun{d}\tm \eun{d} \simeq \eun{2d}$ we have $a - b = c - d$ if and only if $(a, b) - (c, d) \in \mc{D}$. 
Hence, if $\pi_{\mc{A}}$ is the orthogonal projection onto $\mc{A}$ and $\pi_{\mc{A}}(a, b) = (c, -c)$, then $a - b = 2c$. Thus for $E \se \eun{d}$, if $\pi_1\cl \eun{2d} \ra \eun{d}$ is the projection onto the first $d$ coordinates, i.e., $\pi_1(a,b) = a$, then 
\[
a - b \in E \quad \LRA \quad 2(\pi_1\circ\pi_{\mc{A}})(a,b) \in E. 
\]
Now let $R \in O(2d)$ be the orthogonal transformation with block matrix
\[
R = \frac{1}{\sqrt{2}}\begin{bmatrix}
I & -I \\
I & I
\end{bmatrix}
\]
that maps $\mc{A}$ onto the subspace $S_1 := \{(x, 0) \cl x \in \eun{d}\}$. Then
\[
a - b \in E \quad \LRA \quad 2(\pi_1\circ R)(a,b) \in E.
\]
But then by the invariance of the spherical measure under orthogonal transformations
\begin{align*}
\sigma\prs{\{ (a,b) \in \sph{2d-1} \cl a - b \in E \}}
%
&= \int_{\sph{2d-1}}1_E(a - b)\dx{\sigma(a,b)}\nlspace
&= \int_{\sph{2d-1}}1_{\frac{1}{2}E}[\pi_1(R(a,b))]\dx{\sigma(a,b)}\nlspace
&= \int_{\sph{2d-1}}1_{\frac{1}{2}E}[\pi_1(a,b)]\dx{\sigma(a,b)}\nlspace
&= \int_{\sph{2d-1}}1_{\frac{1}{2}E}(a)\dx{\sigma(a,b)}.
\end{align*}
For $dx$ the Lebesgue measure on $\eun{d}$ and $dy$ the Lebesgue measure on $\eun{d-1}$ we have
\begin{align*}
\int_{\sph{2d-1}}1_{\frac{1}{2}E}(a)\dx{\sigma(a,b)} &= 2\int_{B_{2d-1}(0,1)}1_{\frac{1}{2}E}(x)\frac{1}{\sqrt{1 - |x|^2 - |y|^2}}\dx{x}\dx{y}\nlspace
&= \int_{\eun{d}}1_{\frac{1}{2}E}(x)\cdot \prs{2\int_{\eun{d-1}}1_{B_{2d-1}(0,1)}(x,y)\frac{1}{\sqrt{1 - |x|^2 - |y|^2}}\dx{y}}\dx{x}\nlspace
&= \int_{\eun{d}}1_{\frac{1}{2}E}(x)\, F(x)\dx{x}. 
\end{align*}
Letting $r_0^2 := 1 - |x|^2$, we have
\begin{align*}
F(x) &= C_{d}\int_{0}^{{r_0}}\frac{r\os{d-2}\dx{r}}{\sqrt{ r_0^2 - r^2 }} \leq C
\end{align*}
for all $x \in B_d(0,1)$. Thus $F \in L^\nf(\eun{d})$, so the pushforward measure $\sigma_{(-)}$ is absolutely continuous with bounded density. Hence $B \cl L^1\tm L^1 \ra L\os{1}$ and by Proposition \ref{p2} $B \cl L^1\tm L^1 \ra L\os{1/2}$ for $d \geq 2$.


\section{$L^p$-Improving and Quasi-Banach Bounds for $k$-Simplex Operators for $d \geq k$}\label{s3}

In this section we establish $L^p$-improving and quasi-Banach bounds that hold in lower dimensions $d\geq k$, which are not included in the range of bounds obtained by the technique of Cook, Lyall, and Magyar \cite{CookLyallMagyar}.

Let $E_1\ldots E_k\se \eun{d}$ be measurable and using the symmetry of the operator assume WLOG that $|E_1| \leq |E_j|$ for all $j$. Then we have by the $L^p$-improving bounds for  spherical averages,
\begin{align*}
\norms{S^k(1_{E_1}, \ldots, 1_{E_k})}_{L\os{d+1}} &\leq \norm{S(1_{E_1})}_{L\os{d+1}}\nlspace
&\leq \norm{1_{E_1}}_{L\os{\frac{d+1}{d}}}\nlspace
&= |E_1|\os{\frac{d}{d+1}} \nlspace
&\leq |E_1|\os{\frac{d}{k(d+1)}}\cdots|E_k|\os{\frac{d}{k(d+1)}},
\end{align*}
so $S^k$ satisfies a restricted strong-type $(k + \frac{k}{d}, \ldots,  k + \frac{k}{d}, d+1)$ bound, which has an $L^p$ improvement ratio of $d$ versus the H\"older exponents. 

\begin{theorem}
	$S^k$ satisfies a restricted strong-type $(\frac{k(d+1)}{d}, \ldots, \frac{k(d+1)}{d}, d+1)$ bound for $d \geq k$.
\end{theorem}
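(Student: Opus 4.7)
The plan is that the calculation in the paragraph immediately preceding the statement already contains the entire proof, once one notices the algebraic identity $k + \frac{k}{d} = \frac{k(d+1)}{d}$. I would therefore organize that derivation cleanly as a short three-step argument, with the only real content being a pointwise domination by a spherical average and an application of the classical spherical $L^p$-improving inequality.

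First, I would justify the WLOG reduction to the case $|E_1| = \min_j |E_j|$. The operator $S^k$ is symmetric in its $k$ inputs: for any permutation $\sigma$ of $\{1,\ldots,k\}$ the stabilizer of $\{0, u_1,\ldots,u_k\}$ in $O(d)$ contains an element $Q$ with $Qu_i = u_{\sigma(i)}$, and the substitution $R \mapsto RQ^{-1}$ in the Haar integral defining $S^k$, using right-invariance of $\mu$, produces the required symmetry. Hence I can relabel so that $E_1$ has the smallest measure.

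Second, I would establish the pointwise inequality $S^k(1_{E_1},\ldots,1_{E_k})(x) \leq S^1(1_{E_1})(x)$ by bounding each factor $1_{E_j}(x - Ru_j)$ with $j \geq 2$ by $1$ inside the Haar integral; what remains is precisely the spherical average of $1_{E_1}$ (since the integrand depends only on the orbit of $u_1$ under $O(d)$, namely $\sph{d-1}$). Taking $L^{d+1}$ norms and invoking the classical Littman--Strichartz bound $S^1\cl L^{(d+1)/d}(\eun{d}) \to L^{d+1}(\eun{d})$ then gives
\[
\norms{S^k(1_{E_1},\ldots,1_{E_k})}_{L\os{d+1}} \leq \norms{S^1(1_{E_1})}_{L\os{d+1}} \leq C\,|E_1|\os{d/(d+1)}.
\]

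Third, using $|E_1| \leq |E_j|$ for every $j$, I distribute the mass evenly among the $k$ slots:
\[
|E_1|\os{d/(d+1)} = \prod_{j=1}^{k}|E_1|\os{d/(k(d+1))} \leq \prod_{j=1}^{k}|E_j|\os{d/(k(d+1))},
\]
which is exactly the restricted strong-type bound with each $p_j = k(d+1)/d$ and output exponent $d+1$. The hypothesis $d \geq k$ is needed only to ensure that the simplex $\Delta_k$ and hence the operator $S^k$ are defined. No substantive obstacle arises; the only bookkeeping point is the symmetry justification in the first step.
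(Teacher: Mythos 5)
Your proposal is correct and is essentially identical to the paper's own argument: the paper likewise discards all factors except the one attached to the smallest set, applies the spherical $L^{(d+1)/d}\to L^{d+1}$ improving bound, and redistributes $|E_1|^{d/(d+1)}$ across the $k$ slots using $|E_1|\leq|E_j|$. The only addition is your explicit justification of the symmetry reduction via the stabilizer of the simplex in $O(d)$, which the paper asserts without proof.
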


\noindent Hence by interpolation against the $L^\nf \tm \cdots \tm L^\nf \ra L^\nf$ bound
\[
S^k \cl L^{p}\tm \cdot \tm L^p \ra L\os{\frac{dp}{k}} \quad \text{ for all } \quad p > \frac{k(d+1)}{d}.
\]
Now using the fact that each face of a regular $k$-simplex is a regular $(k-1)$-simplex, we have
\begin{align*}
\norms{S^k(f_1, \ldots, f_k)}_{L^1} &\leq \int_{\eun{d}}|f_1|(x)\int_{SO(d)}|f_2|(x - R(u_2 - u_1))\cdots|f_k|(x - R(u_k - u_1))\dx{\mu(R)}\dx{x}\nlspace
&= \int |f_1|(x)S\os{k-1}(|f_2|, \ldots, |f_k|)(x)\dx{x}\nlspace
&\leq \norm{f_1}_{L\os{\frac{dp}{dp - k+1}}}\norms{S\os{k-1}(|f_2|, \ldots, |f_k|)}_{L\os{\frac{dp}{k-1}}}\nlspace
&\leq C\norm{f_1}_{L\os{\frac{dp}{dp - k+1}}}\norm{f_2}_{L\os{p}}\cdots \norm{f_k}_{L^p},
\end{align*}
for all $p > \frac{(k-1)(d+1)}{d}$. Applying the technique from Section \ref{s1} then establishes nontrivial bounds for $S^k$.
\begin{corollary}
	The $k$-simplex operator $S^k$ satisfies the bound
	\[
	S^k \cl L\os{p_1}\tm\cdots \tm L\os{p_k} \ra L\os{r} \quad \text{ where } \quad \frac{1}{p_1} + \cdots + \frac{1}{p_k} = \frac{1}{r},
	\]
	and $(\frac{1}{p_1}, \ldots, \frac{1}{p_k}, \frac{1}{r})$ lies in the interior of the convex hull of the set of points $(\frac{1}{q_1}, \ldots, \frac{1}{q_k}, \frac{1}{r})$ with
	\[
	q_{\sigma(1)} = \frac{d+1}{d},  q_{\sigma(j)} = \frac{(k-1)(d+1)}{d} ~~~ \text{ for } \quad 2 \leq j \leq k, \quad r = \frac{d+1}{2d},
	\]
	for some permuation $\sigma$ of $\{1, \ldots, k\}$. In particular, 
	\[
	S^k \cl L\os{kr}\tm\cdots\tm L\os{kr} \ra L\os{r} ~~\quad \text{ for } \quad \quad r > \frac{d+1}{2d} \quad \text{ and } \quad d \geq k.
	\]
\end{corollary}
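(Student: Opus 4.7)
The plan is to combine the $L^1$ bound derived immediately above with the permutation symmetry of $S^k$, the quasi-Banach extension of Proposition~\ref{p2}, and multilinear interpolation. Taking $p\searrow\tfrac{(k-1)(d+1)}{d}$ in the inequality $\|S^k(f_1,\ldots,f_k)\|_{L^1}\le C\|f_1\|_{L^{dp/(dp-k+1)}}\prod_{j\ge 2}\|f_j\|_{L^p}$ yields $L^1$-target estimates with exponents arbitrarily close to the vertex $q_1=\tfrac{d+1}{d}$, $q_j=\tfrac{(k-1)(d+1)}{d}$ for $j\ge 2$, whose reciprocals sum to $\tfrac{2d}{d+1}$. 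Because the derivation requires strict inequality $p>\tfrac{(k-1)(d+1)}{d}$, the vertex itself is never attained, which matches the corollary's requirement of interior (rather than closed) convex-hull membership.

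Next I would invoke the full permutation symmetry of $S^k$ in its arguments. The orthogonal stabilizer of the origin in the symmetry group of the regular simplex $\{0,u_1,\ldots,u_k\}$ acts transitively on $\{u_1,\ldots,u_k\}$, so for each $\sigma\in S_k$ there is $R_\sigma\in O(d)$ with $R_\sigma u_i=u_{\sigma(i)}$; substituting $R\mapsto RR_\sigma$ in the defining integral and using Haar invariance gives $S^k(f_1,\ldots,f_k)=S^k(f_{\sigma(1)},\ldots,f_{\sigma(k)})$. Applied to the almost-vertex estimate this produces $k$ analogous bounds, one for each choice of distinguished input. Multilinear interpolation among them, which is standard since all input exponents exceed $1$, then yields $L^{p_1}\times\cdots\times L^{p_k}\to L^1$ bounds at every $(1/p_1,\ldots,1/p_k)$ in the relative interior of the vertex simplex.

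Since $S^k$ satisfies the localization conditions (L1) and (L2), Proposition~\ref{p2} extends each $L^1$-target bound to $L^s$ for every $s\in[r_0,1]$, where $r_0=\tfrac{d+1}{2d}$ and $\sum 1/p_i=2d/(d+1)$ throughout the convex hull; this establishes the convex-hull claim of the corollary. At the centroid one obtains in particular $S^k:L^{kr_0}\times\cdots\times L^{kr_0}\to L^{r_0}$, and interpolating this against the trivial $L^\infty\times\cdots\times L^\infty\to L^\infty$ bound yields $S^k:L^{kr}\times\cdots\times L^{kr}\to L^r$ for every $r>r_0$, completing the ``in particular'' statement.

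The main point I expect to require care is the quasi-Banach multilinear interpolation between the $L^{r_0}$-target bound ($r_0<1$) and the $L^\infty$-target bound in the final step, which crosses the $L^1$ threshold; this is handled by standard real interpolation on Lebesgue spaces, but to avoid quasi-Banach interpolation one can alternatively interpolate first in the Banach range to obtain $L^1$-target bounds at every diagonal input $(kr,\ldots,kr)$ with $r\in(r_0,1]$ and then invoke Proposition~\ref{p2} pointwise to descend into $L^r$, treating any range $r>1$ by direct Banach-range interpolation of the centroid $L^1$ bound against $L^\infty$.
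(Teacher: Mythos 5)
Your proposal is correct and follows essentially the same route the paper intends: the near-vertex $L^1$-target bound $\|S^k(f_1,\ldots,f_k)\|_{L^1}\le C\|f_1\|_{L^{dp/(dp-k+1)}}\prod_{j\ge2}\|f_j\|_{L^p}$ for $p>\tfrac{(k-1)(d+1)}{d}$, permutation symmetry, Banach-range multilinear interpolation, and then Proposition~\ref{p2} to descend below $L^1$ (the paper compresses all of this into ``applying the technique from Section~\ref{s1}''). Your explicit handling of why the vertices are only approached (hence ``interior'') and your care about avoiding genuinely quasi-Banach interpolation in the final diagonal step are faithful elaborations of the intended argument, not deviations from it.
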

\noindent A straightforward calculation shows that for nice functions $f, g, h$,
\[
\angs{T(f,g), h} = \angs{f, T(g,h)}.
\]
It follows that $T\cl L^p \tm L^q \ra L^r$ implies $T\cl L\os{r'}\tm L\os{p}\ra L\os{q'}$ whenever $1 \leq r, q' < \nf$. Applying this with the $L^p$ improving bound above shows that $T\cl L^p \tm L^q \ra L^r$ for $d\geq 2$ when $(1/p, 1/q, 1/r)$ is one of the following $L^p$-improving triples
\[
\prs{ \frac{d}{d+1}, \frac{d}{2(d+1)}, \frac{d+2}{2d + 2} },\quad \prs{ \frac{d}{2(d+1)}, \frac{d}{d+1},  \frac{d+2}{2d + 2} }.
\]


\section{Restricted strong-type $(k,k,\ldots, k)$ bounds for $S^k$ for $d \geq k$}\label{s4}

In \cite{GreenleafIosevichKrauseLiu} the authors established that a family of operators that includes $T$ in dimension $d = 2$ satisfies a restricted strong-type $(2,2,2)$ bound. 
Here we adapt the ideas of the proof in \cite{GreenleafIosevichKrauseLiu} to obtain a restricted strong-type $(k,k,\ldots, k)$ bound for $S^k$ in dimensions $d \geq k$. The interesting cases occur when $d$ is close to $k$, since in higher dimensions this bound follows from the method of Cook, Lyall, and Magyar \cite{CookLyallMagyar}.
The key observation behind this adaptation is that if $p_1,\ldots, p_k$ are linearly independent points of $\sph{d-1}$, then on a neighborhood of $(p_1,\ldots,p_k)$ the addition map $(u_1,\ldots,u_k) \mapsto u_1 + \ldots + u_k$ from $\sph{d-1}\tm \cdots \tm \sph{d-1} \ra \eun{d}$ is a submersion and hence behaves locally like a projection.
\begin{theorem}
	$S^k$ is of restricted strong-type $(k, \ldots, k, k)$ in dimensions $d \geq k$. 
\end{theorem}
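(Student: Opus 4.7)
My plan is to adapt the argument of Greenleaf, Iosevich, Krause, and Liu for $k = d = 2$ to general $d \geq k$, leveraging the submersion observation recorded just before the theorem statement.

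The first step is dualization. By $L^p$-duality, the restricted strong-type $(k,\ldots,k,k)$ bound for $S^k$ is equivalent to the $(k+1)$-linear form estimate
\[
\Lambda(1_{E_0},1_{E_1},\ldots,1_{E_k}) \le C\,|E_0|^{(k-1)/k}\prod_{i=1}^k |E_i|^{1/k},
\]
where $\Lambda(f_0,\ldots,f_k):=\int f_0\cdot S^k(f_1,\ldots,f_k)\,dx$. A change of variables $y=x-Ru_1$ combined with rigid motions of $O(d)$ that permute the simplex vertices shows that $\Lambda$ equals the integral of $f_0\otimes\cdots\otimes f_k$ against the natural $O(d)$-invariant measure on the variety of unit regular $k$-simplices in $(\eun{d})^{k+1}$, and is in particular invariant under all permutations of its arguments. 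So any asymmetric bound of the form above suffices.

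The second step is a standard pigeonhole-refinement reduction to a restricted weak-type inequality $\lambda^k|F|\lesssim\prod_{i=1}^k|E_i|$ on refined dyadic level sets of the counting function $x\mapsto\mu(\{R\in O(d):x-Ru_i\in E_i\text{ for all } i\ge 1\})$. The geometric core is then a change of variables driven by the paper's key observation: the addition map $a\colon(\sph{d-1})^k\to\eun{d}$, $(w_1,\ldots,w_k)\mapsto\sum_i w_i$, is a submersion at every linearly independent $k$-tuple, and in particular in a neighborhood of the reference configuration $(u_1,\ldots,u_k)$ whenever $d\ge k$. Using a partition of unity on $O(d)$, I would select local coordinates in which $d$ rotational parameters encode the image $R(\sum_i u_i)\in\eun{d}$ of the sum map, while the remaining $\binom{d}{2}-d$ parameters describe the $O(d-1)$-fiber. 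In these coordinates the integration over $O(d)$ splits into a Radon-like factor in the sum direction, which by the endpoint $L^p$-improving bound for spherical averaging on $\sph{d-1}$ yields a factor $|E_i|^{1/k}$ per vertex, together with a bounded fiber factor; a Fubini-type reorganization of the $x$ and $R$ integrations then assembles the asserted product bound.

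The hard part will be making this change of variables globally rigorous on the compact group $O(d)$. The submersion property is only local, and although $(u_1,\ldots,u_k)$ are maximally linearly independent, rotations $R$ whose frames $(Ru_1,\ldots,Ru_k)$ wander far from the reference configuration can push the sum-map Jacobian toward degeneracy. Controlling this requires a finite partition of unity, depending only on $d$ and $k$, on which the Jacobian stays bounded below; this is possible precisely because the regular simplex sits in general position. Once this geometric control is secured, the remainder of the argument reduces to iterated applications of the $L^p$-improving bound for spherical averages combined with H\"older's inequality.
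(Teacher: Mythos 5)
Your proposal has a genuine gap at its geometric core, and the missing idea is precisely the one that drives the paper's proof. In the operator $S^k$ the $k$ sphere points $Ru_1,\dots,Ru_k$ are rigidly coupled through the \emph{single} rotation $R$; the map $R\mapsto(Ru_1,\dots,Ru_k)$ parameterizes only a $\dim O(d)$-dimensional submanifold of $(\sph{d-1})^k$, so the submersion property of the free addition map on the full product of spheres is not directly available to you. Your proposed splitting of the rotational parameters of a single copy of $O(d)$ into a ``sum direction'' and an ``$O(d-1)$-fiber'' only produces the spherical average of one function over the sphere of radius $|\sum_i u_i|$; it cannot decouple the $k$ vertices, and so cannot yield a factor $|E_i|^{1/k}$ per vertex. (The numerology is also off: the $L^{(d+1)/d}\to L^{d+1}$ improving bound for the sphere involves $d$, not $k$, and is used in the paper for the \emph{other} restricted strong-type bound, the $(k\frac{d+1}{d},\dots,d+1)$ one; the $(k,\dots,k,k)$ bound is proved with no $L^p$-improving input at all.) The step you are missing is to raise $\norms{S^k(1_{E_1},\dots,1_{E_k})}_{L^k}$ to the $k$-th power and expand it as an integral over $k$ \emph{independent} rotations $(R_1,\dots,R_k)\in(O(d))^k$. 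Only then do you have $k$ freely varying sphere points; the paper then runs a greedy selection (using that the simplex vertices $u_1,\dots,u_{j+1}$ are linearly independent) to keep one factor $1_{A_i}(x-R_ia_i)$ from each of the $k$ inner integrals with $R_1a_1,\dots,R_ka_k$ linearly independent, drops the rest, and applies the inverse function theorem to the map $(y^1,\dots,y^k)\mapsto(f_2(y^2)-f_1(Ay^1),\dots,f_k(y^k)-f_1(Ay^1))$ on local sphere charts to get the bound $C|E_1|\cdots|E_k|$ locally, finishing by compactness of $(O(d))^k$.

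A secondary issue: your first step, testing the dual form against an indicator $1_{E_0}$, only yields the restricted \emph{weak}-type $(k,\dots,k,k)$ bound, i.e.\ an $L^{k,\infty}$ conclusion, which is strictly weaker than the restricted strong-type statement claimed. To get the $L^k$ norm bound you must either test against general $f_0\in L^{k/(k-1)}$ (which your pigeonhole reduction does not recover at a single exponent) or, as the paper does, estimate $\norms{S^k(1_{E_1},\dots,1_{E_k})}_{L^k}^k$ directly.
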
 

\begin{proof}
	
We assume that $d\geq k$ and let $E_1, \ldots, E_k \se \eun{d}$ be measurable sets, WLOG (by the symmetry of the operator in its inputs) with $|E_1| \leq |E_2| \leq \ldots \leq |E_k|$. Our goal in this section is to show that
\begin{align*}
\norm{S^k(1_{E_1},\ldots, 1_{E_k})}_{L^k} &\leq C(\abs{E_1}\cdots \abs{E_k})\os{1/k},
\end{align*}
i.e., that $S^k$ is of restricted strong-type $(k, \ldots, k, k)$. 
We have for $\{0, u_1, \ldots, u_k\}$ the vertices of a regular $k$-simplex of unit side length, 
\begin{align*}
&\norms{S^k(1_{E_1}, \ldots, 1_{E_k})}_{L^k}^k
=  \int_{\eun{d}} 
\prod_{i=1}^{k}\prs{ \int_{O(d)} 1_{E_1}(x - R_iu_1)\cdots 1_{E_k}(x - R_iu_k) \dx\mu(R_i)}
\dx x.
\end{align*}
By the compactness of the product space $O(d)\tm \cdots \tm O(d) = (O(d))^k$ it is sufficient to show that for each $(R_1, \ldots, R_k) \in (O(d))^k$ there is a neighborhood $N(R_1, \ldots, R_k)$ of $(R_1, \ldots, R_k)$ such that, with $\mu^k := (\mu \tm \cdots \tm \mu)$,
\begin{align*}
\int_{\eun{d}} 
\int_{N(R_1,\ldots,R_k)}\prod_{i=1}^{k} 1_{E_1}(x - R_iu_1)\cdots 1_{E_k}(x - R_iu_k) \dx\mu^k(R_1, \ldots, R_k)
\dx x &\leq C|E_1|\cdots|E_k|,
\end{align*}
since then $(O(d))^k$ will be covered by finitely many such neighborhoods $N(R_1, \ldots, R_k)$.

To show that such a neighborhood exists, for each $i$ we will keep one of the factors in $$1_{E_1}(x - R_iu_1) \cdots 1_{E_k}(x - R_iu_k)$$ and drop the remaining $k-1$. Which factors we keep and which ones we drop will depend on the relative positions of the vectors $R_iu_j$. \\

\noindent\textit{(A) Selecting which factors to keep and drop:} We fix $(R_1, \ldots, R_k) \in (O(d))^k$ and
use the following algorithm to select which factors to keep and which to drop in each integral:
\begin{enumerate}[label=(\roman*)]
	\item For $i = 1$ we will keep $1_{E_1}(x - R_1u_1)$. We set $a_1 = u_1$ and $A_1 = E_1$.
	\item Suppose that $j < k$ and we have chosen $a_1, \ldots a_j$, with $a_i = u_m$ for some $m\leq i$ for each $i$ and such that $R_1a_1,\ldots, R_ja_j$ are linearly independent. 
	We choose $a_{j+1}$ and $A_{j+1}$ as follows: Note that $\{0, R_{j+1}u_1, \ldots, R_{j+1}u_{j+1}\}$ form the set of vertices of a regular $(j+1)$-simplex, and hence the vertices $R_{j+1}u_1, \ldots, R_{j+1}u_{j+1}$ are linearly independent. It follows that there must be a $p$ in $1, \ldots, j+1$ such that $R_{j+1}u_{p}\not\in\sspan\{ R_1a_1, \ldots, R_ja_j \}$. Then we set $a_{j+1} = a_p$ and $A_{j+1} = A_p$. 
\end{enumerate}
This algorithm produces sequences $a_1, \ldots, a_k$ of vectors and $A_1, \ldots, A_k$ of sets, where each $a_i$ is equal to some $u_m$, and $A_i = E_m$, with $m \leq i$ for $i = 1, \ldots, k$. For each $i$ we will keep the factor $1_{A_i}(x - R_ia_i)$ in the integrand and drop the remaining $k-1$ factors corresponding to $R_i$. \\

\noindent\textit{(B) Bounding inner integral by parameterized spherical integral:}
We now let 
\[
B:= B(R_1a_1, r)\tm \cdots\tm B(R_ka_k, r),
\]
where $r > 0$ will be chosen sufficiently small in a later step, and define the open neighborhood 
\[
N(R_1, \ldots, R_k) := \{ (S_1,\ldots, S_k) \in (O(d))^k \cl (S_1a_1,\ldots,S_ka_k) \in B \}.
\]
We now have, denoting again $d\mu^k := d(\mu\tm\cdots\tm\mu)$,
\begin{align*}
&\int_{\eun{d}} 
{\int_{N(R_1,\ldots, R_k)}\prod_{i=1}^{k} 1_{E_1}(x - R_iu_1)\cdots 1_{E_k}(x - R_iu_k) \dx\mu^k}(R_1, \ldots, R_k)
\dx x\nlspace
\leq &\int_{\eun{d}}\int_{N(R_1,\ldots,R_k)}1_{A_1}(x - R_1a_1)\cdots 1_{A_k}(x - R_ka_k)\dx\mu^k(R_1,\ldots,R_k)\dx x\nlspace
= &\int_{\eun{d}}\int_{B\se (\sph{d-1})^k}1_{A_1}(x - p_1)\cdots 1_{A_k}(x - p_k)
\dx\sigma^k(p_1,\ldots,p_k)\dx x\nlspace
= &\int_{\eun{d}}1_{A_1}(x)
\int_{B\se (\sph{d-1})^k}1_{A_2}(x - (p_2 - p_1))\cdots 1_{A_k}(x - (p_k - p_1))\dx\sigma^k(p_1,\ldots,p_k)\dx x,\numberthis\label{e4}
\end{align*}
and it suffices to show that inner integral in the last line is $\leq C|A_2|\cdots|A_k|$ with constant $C$ independent of $E_1,\ldots, E_k$ and $x$. 

There is an $s = s(r) > 0$ and for each $i$ an orthogonal transformation $O_i$ such that a subset $S$ of $\sph{d-1}$ containing $B(R_ia_i, r) \cap\sph{d-1}$ is parameterized by 
\[
f_i(x_1,\ldots, x_{d-1}) = O_i(x_1,\ldots, x_{d-1}, \sqrt{1 - |x|^2}) \quad \text{ for } \quad x = (x_1,\ldots, x_{d-1}) \in C_{d-1}(0, s),
\]
where $f_i(0) = R_ia_i$ and $C_{d-1}(0,s) = [-s,s]\os{d-1}$. 
Recall that the tangent space to $\sph{d-1}$ at the point $p_i := R_ia_i$ can be realized as the hyperplane $P(p_i):=\{ x \cl x\cdot p_i = 0 \}$ and that it is spanned by the partial derivatives of $f_i$ at 0. By choosing $s$ (and hence $r$) small enough we can assume that the ``volume element'' of the coordinate chart $f_i$ is bounded on $C_{d-1}(0,s)$, with bound depending only on $s$. By translation invariance of Lebesgue measure we can assume that $x = 0$, and we get for $y^i = (y^i_1,\ldots, y^i_{d-1})$,
\begin{align*}
&\int_{B\se (\sph{d-1})^k}1_{A_2}(p_2 - p_1)\cdots 1_{A_k}(p_k - p_1)\dx\sigma^k(p_1,\ldots,p_k) \nlspace
\leq ~C &\int_{C_{k(d-1)}(0,s)}1_{A_2}(f_2(y^2) - f_1(y^1))\cdots1_{A_k}(f_k(y^k) - f_1(y^1))\dx(y^1,\ldots, y^k).\\[-2mm]
\end{align*}

\noindent\textit{(C) Linear algebra for tangent hyperplanes:}
Recall that the vectors $p_i = R_ia_i$, $1 \leq i \leq k$ were chosen to be linearly independent. For each $i \geq 2$ let 
\[
p_i = p_i\os{\parl} + p_i\os{\perp} \quad \text{ with } \quad  p_i\os{\parl} \in P(p_1) \quad \text{ and } \quad p_i\os{\perp} \in \sspan\{p_1\}.
\]
If there are $\alpha_1,\ldots\alpha_k$ not all zero with $0 = \alpha_1 p_1\os{\parl} + \ldots + \alpha_k p_k\os{\parl}$, then
\[
\alpha_1 p_1 + \ldots + \alpha_k p_k = \alpha_1 p_1\os{\perp} + \ldots + \alpha_k p_k\os{\perp} = cp_1,
\]
contradicting the linear independence of $p_1, \ldots, p_k$. Hence the orthogonal projections $p_i\os{\parl}$ are also linearly independent. Further, since $p_i$ and $p_1$ are not linearly dependent for $i \geq 2$ the vector $p_i\os{\parl}$ is not contained in $P(p_i)$. Formally, if $p_i\os{\parl} \in P(p_i)$, then
\begin{align*}
\norm{p_i}^2 = p_i \cdot p_i = p_i \cdot p_i\os{\perp} \leq \norm{p_i}\norm{p_i\os{\perp}} \quad \Ra \quad \norm{p_i} = \norm{p_i\os{\perp}} \quad \Ra \quad p_i = c p_1,
\end{align*}
contradicting the independence of $p_i$ and $p_1$. Hence $\sspan\prss{P(p_i)\cup \{p_i\os{\parl}\}} = \eun{d}$.

Now since $p_2\os{\parl}, \ldots, p_k\os{\parl}$ are in the image of $Df_1(0)$, which is exactly $P(p_1)$, there is an invertible $(d-1)\tm (d-1)$ matrix $A$ such that the first columns of $Df_1(0)A = D(f_1\circ A)(0)$ are $p_2\os{\parl}, \ldots, p_k\os{\parl}$. Then by a simple change of the first $d-1$ variables we have for some bounded open set $V$,
\begin{align*}
&\int_{C_{k(d-1)}(0,s)}1_{A_2}(f_2(y^2) - f_1(y^1))\cdots1_{A_k}(f_k(y^k) - f_1(y^1))\dx(y^1,\ldots, y^k)\nlspace
\leq ~C&\int_{V\tm C_{(k-1)(d-1)}(0,s)}1_{A_2}(f_2(y^2) - f_1(Ay^1))\cdots1_{A_k}(f_k(y^k) - f_1(Ay^1))\dx(y^1,\ldots, y^k)\nlspace
\leq ~C&\int_{C_{k(d-1)}(0,ks)}1_{A_2}(f_2(y^2) - f_1(Ay^1))\cdots1_{A_k}(f_k(y^k) - f_1(Ay^1))\dx(y^1,\ldots, y^k),
\end{align*}
where $k$ only depends on our choice of $a_1, \ldots, a_k$.\\

\noindent\textit{(D) Applying the inverse function theorem:} We consider the map 
\[
(y^1,\ldots, y^k) \mapsto \Phi(y^1,\ldots, y^k) =\prs{f_2(y^2) - f_1(Ay^1),\, f_k(y^k) - f_1(Ay^1)}
\]
from $\eun{k(d-1)}$ into $\eun{(k-1)d}$. Since each map $f_i$ is a submersion, i.e., each has a surjective derivative at each point, by the construction of this map the partial derivatives $\pd_{y^i_j}\Phi(0)$ for $i \geq 2$ form a linearly independent set $\mc S$ of $(k-1)(d-1)$ vectors. By the arguments in (C) above the partial derivatives $\pd_{y^1_j}\Phi(0)$ are also linearly independent with the derivatives in $\mc{S}$ and with one another. Thus we have a set of $(k-1)d$ linearly independent partial derivatives at the origin, specifically the derivatives $\{ \pd_{y^1_j}\Phi(0) \cl i\geq 2 \text{ or } (i = 1 \text{ and } 1\leq j \leq k-1) \}$ are linearly independent. 

If we denote $y^1_f = (y^1_1,\, \ldots,\, y^1_{k-1})$ and $y^1_l = (y^1_{k},\, y^2_{d-1})$, then the inverse function theorem tells us that we can choose $s$ (and hence $r$) small enough that for each fixed $y^1_l \in C_{d-k}(0,ks)$ the map
\[
(y^1_{f}, y^2, \ldots, y^k) \mapsto \Phi(y^1,\ldots, y^k) =\prs{f_2(y^2) - f_1(Ay^1),\, f_k(y^k) - f_1(Ay^1)}
\]
is a diffeomorphism of $C_{(k-1)d}(0,ks)$ onto an open subset $U(y^1_l)$ of $\eun{(k-1)d}$. 
Note that by choosing $s$ small enough we can assume that the Jacobian of this diffeomorphism is uniformly bounded for all choices of $y^1_l \in C_{d-k}(0,ks)$.
Then we have
\begin{align*}
&\int_{C_{k(d-1)}(0,ks)}1_{A_2}(f_2(y^2) - f_1(Ay^1))\cdots1_{A_k}(f_k(y^k) - f_1(Ay^1))\dx(y^1,\ldots, y^k)\nlspace
\leq~ C&\int_{C_{d-k}(0,ks)}\int_{U(y^1_l)}1_{A_2}(z^2)\cdots1_{A_k}(z^k)\dx(z^2,\ldots, y^k)\dx y^1_l\nlspace
\leq~ C&|A_2|\cdots |A_k| \leq C |E_2|\cdots |E_k|,\\[-2mm]
\end{align*}
where we have used that $A_i = E_m$ with $m \leq i$ and the assumption that $|E_1| \leq |E_2| \leq \ldots \leq |E_k|$. Inserting this into \eqref{e4} gives the required estimate.\\

\noindent\textit{(E) Combining estimates:} Inserting the previous estimate back into \eqref{e4}, using compactness to get a finite covering, summing the corresponding integrals and taking the $k$th root gives
\begin{align*}
\norm{S^k(1_{E_1},\ldots, 1_{E_k})}_{L^k} &\leq C(\abs{E_1}\cdots \abs{E_k})\os{1/k}.
\end{align*}
\end{proof}


\vspace{1mm}

\begin{bibdiv}
	\begin{biblist}
		\bib{CookLyallMagyar}{article}{
			author = {Brian Cook and Neil Lyall},
			author = {Akos Magyar},
			title = {Multilinear maximal operators associated to simplices},
			journal = {Journal of the London Mathematical Society},
			volume = {n/a},
			number = {n/a},
			pages = {},
			keywords = {42B25 (primary)},
			doi = {https://doi.org/10.1112/jlms.12467},
			url = {https://londmathsoc.onlinelibrary.wiley.com/doi/abs/10.1112/jlms.12467},
			eprint = {https://londmathsoc.onlinelibrary.wiley.com/doi/pdf/10.1112/jlms.12467},
		}
		\bib{GrafakosKalton}{article}{
			author={Grafakos, Loukas},
			author={Kalton, Nigel},
			title={Some remarks on multilinear maps and interpolation},
			journal={Math. Ann.},
			volume={319},
			date={2001},
			number={1},
			pages={151--180},
			issn={0025-5831},
			review={\MR{1812822}},
			doi={10.1007/PL00004426},
		}
		\bib{GreenleafIosevichKrauseLiu}{article}{
			Author = {Allan Greenleaf and Alex Iosevich and Ben Krause and Allen Liu},
			Title = {Bilinear generalized Radon transforms in the plane},
			Year = {2017},
			Eprint = {arXiv:1704.00861},	
		}
		\bib{GreenleafIosevichTriangle}{article}{
			author={Greenleaf, Allan},
			author={Iosevich, Alex},
			title={On triangles determined by subsets of the Euclidean plane, the
				associated bilinear operators and applications to discrete geometry},
			journal={Anal. PDE},
			volume={5},
			date={2012},
			number={2},
			pages={397--409},
			issn={2157-5045},
			review={\MR{2970712}},
			doi={10.2140/apde.2012.5.397},
		}
		\bib{JeongLee}{article}{
			author={Jeong, Eunhee},
			author={Lee, Sanghyuk},
			title={Maximal estimates for the bilinear spherical averages and the
				bilinear Bochner-Riesz operators},
			journal={J. Funct. Anal.},
			volume={279},
			date={2020},
			number={7},
			pages={108629, 29},
			issn={0022-1236},
			review={\MR{4103874}},
			doi={10.1016/j.jfa.2020.108629},
		}
		\bib{LaceySparseSphericalMaximal}{article}{
			author={Lacey, Michael T.},
			title={Sparse bounds for spherical maximal functions},
			journal={J. Anal. Math.},
			volume={139},
			date={2019},
			number={2},
			pages={613--635},
			issn={0021-7670},
			review={\MR{4041115}},
			doi={10.1007/s11854-019-0070-2},
		}
		\bib{OberlinMultilinearSpherical}{article}{
			author={Oberlin, Daniel M.},
			title={Multilinear convolutions defined by measures on spheres},
			journal={Trans. Amer. Math. Soc.},
			volume={310},
			date={1988},
			number={2},
			pages={821--835},
			issn={0002-9947},
			review={\MR{943305}},
			doi={10.2307/2000993},
		}
		\bib{PalssonSovine}{article}{
			author={Palsson, Eyvindur A.},
			author={Sovine, Sean R.},
			title={The triangle averaging operator},
			journal={J. Funct. Anal.},
			volume={279},
			date={2020},
			number={8},
			pages={108671, 21},
			issn={0022-1236},
			review={\MR{4109092}},
			doi={10.1016/j.jfa.2020.108671},
		}
		\bib{ShrivastavaShuinMultilinearSpherical}{article}{
			Author = {Saurabh Shrivastava and Kalachand Shuin},
			Title = {$L^p$ estimates for multilinear convolution operators defined with spherical measure},
			Year = {2020},
			Eprint = {arXiv:2006.03754},
		}
		\bib{StovallThesis}{article}{
			Author = {Stovall, Betsy},
			Title = {Lp inequalities for certain generalized Radon transforms},
			journal = {dissertation},
			Year = {2009},
		}	
	\end{biblist}
\end{bibdiv}


\end{document}